\newtheorem{proof.}{Proof}
\newtheorem{theorem}{Theorem}
\newtheorem{lem}{Lemma}
\newtheorem{step}{Step}
\newtheorem{def.}{Definition}[section]
\title{On Narayana numbers which are products of four $b$-repdigits with a consequence}
\author{Passimzouwé Dagou}
\address{Department of Mathematics, University of Kara, Togo}
\email{\url{passimzouwedagou@gmail.com}}
\author{Pagdame Tiebekabe}
\address{Department of Mathematics, University of Kara, Togo}
\email{\url{tpagdame.math@gmail.com}}
\author{Kokou Tchariè}
\address{Department of Mathematics, University of Kara, Togo}
\email{\url{tkokou09@gmail.com}}
\begin{document}

\pagestyle{fancy}
\maketitle
\textbf{Abstract}. 
In this paper, we focus on Narayana numbers which can be written as a products of four repdigits in base $g$, where $g$ is an integer with $g\geq2$. We prove that for $g$ between $2$ and $12$, there are finitely many of these numbers. Moreover we have fully determined them.\\
\textbf{AMS Subject Classifications:} $11B39,\, 11J86,\, 11D61,\, 11Y50$.\\
\textbf{Keywords:} Diophantine equations; Narayana numbers; $g-$ Repdigits; Linear forms in logarithms; Reduction method. 
\section{Introduction}
\label{sect1}
In this paper we study an exponential Diophantine equation. A Diophantine equation is a polynomial equation, usually with two or more unknowns, for which only integer solutions are sought. They can be linear (sum of two or more monomials of degree 1) or exponential (the exponents on the terms can be unknown).\\
We will be specifically interested  in the Narayana numbers which can be written as products of four repdigits in base $g$ with $g\geq2.$\\

The Narayana sequence was introduced by the Indian mathematician Narayana Pandita in the 14th century in his treatise \textit{Ganita Kaumudi}. In this book, he poses a problem about the growth of cows, very similar to Fibonacci problem, but with a different maturity period. In this problem, it is assumed that a calf is born each year from every cow that is at least three years old. Narayana's cow's problem counts the number of calves produced each year \cite{ref4}.\\
The Narayana sequence is define by: $$ N_n=N_{n-1}+N_{n-3} \text{ for } n\geq 3 \text{ with } N_0=0,\; N_1=N_2=1.$$ 
In fixed base $g\geq2$, a repdigit has the following form: $$ \sum_{i=0}^{n-1}d\times g^i=d\times\frac{g^n-1}{g-1}, \text{ where } 1\leq d\leq g-1.$$ 
When $ g = 10 $, one usually omits to mention $ g $ and simply call these numbers as repdigits.\\
Several studies have been conducted  on determining  the terms of linear recurrence sequences that are repdigits in any base $g\geq2$. For more details refer to the recent  results \cite{ref1, ref3, ref6, ref13}. 
\\
Recently in \cite{ref14}, Adédji studied Mulatu and generalized Lucas numbers which are products of four $b-$repdigits and Pagdame et al. in \cite{ref11} Narayana numbers as products of three repdigits in base $g$ by basing their proof 
on double application of Baker's method and a reduction algorithm using computations based on continued fractions. In this paper, we will use the same approach for identifying  Narayana numbers that are products of four repdigits in base $g$.

The present paper is organized as follows. In Section \ref{sect2} we present the main results; Section \ref{sect3} recalls the necessary preliminaries; Section \ref{sect4} contains the proofs.

\section{Statement of main results}
\label{sect2}
In this section, we state all the main results obtained in this paper.
\begin{theorem}
	\label{theorm1}
	Let $g\geq 2$ be an integer. Then the Diophantine equation 
	\begin{equation}
	\label{eq1}
	N_k=d_1\frac{g^\ell-1}{g-1}\cdot d_2\frac{g^m-1}{g-1}\cdot d_3\frac{g^n-1}{g-1}\cdot d_4\frac{g^t-1}{g-1}
	\end{equation}
	has only finitely many solutions in integers $k, d_1, d_2, d_3, d_4, \ell, m, n, t$ such as $$1\leq d_i\leq g-1 \text{ for } i=1, 2, 3, 4 \text{ and } 1\leq \ell\leq m\leq n\leq t.$$ Furthermore, we have 
	$$
	t<2.11\times10^{67}\log^{12}g   \text{ and }  k< 2.54\times10^{68}\log^{13}g.
	$$
	In the following theorem, we completely and explicitly give all solutions of equation (\ref{eq1}) corresponding to $2\leq g\leq 12$.
\end{theorem}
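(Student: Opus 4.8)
The plan is to follow the linear-forms-in-logarithms strategy of \cite{ref11, ref14}, combining a Binet representation of $(N_k)$ with repeated applications of Matveev's theorem. First I would record the analytic input. The characteristic polynomial of the recurrence is $x^3-x^2-1$, which has one real root $\alpha\approx 1.4656$ and two complex conjugate roots $\beta,\gamma$ with $|\beta|=|\gamma|=\alpha^{-1/2}<1$ (since the product of the roots is $1$). This yields a Binet formula $N_k=a\alpha^k+b\beta^k+c\gamma^k$ with $a\in\mathbb{Q}(\alpha)$, a cubic field, together with the standard estimates $\alpha^{k-2}\le N_k\le\alpha^{k-1}$ and $|N_k-a\alpha^k|\le 2|b|\,\alpha^{-k/2}$, which is at most an absolute constant.

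Writing $D:=d_1d_2d_3d_4$ and $S:=\ell+m+n+t$, I would first use $\tfrac12 g^{x}\le g^{x}-1<g^{x}$ together with $D\le(g-1)^4$ to sandwich $N_k$ between constant multiples of $g^{S}$, and compare with $\alpha^{k-2}\le N_k\le\alpha^{k-1}$; this shows $k$ and $S$ (hence $k$ and $t$) are linearly related, so it suffices to bound $k$. To do so I rewrite \eqref{eq1} as $(g-1)^4N_k=D(g^\ell-1)(g^m-1)(g^n-1)(g^t-1)$ and factor the right-hand side as $g^{S}\prod_i(1-g^{-x_i})$, whose dominant deviation from $g^{S}$ is of order $g^{S-\ell}$ because $\ell$ is the smallest exponent.

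Substituting $N_k=a\alpha^k+O(1)$ and dividing by the leading term $D\,g^{S}$ produces a first linear form
\[
\Lambda_1=\frac{a(g-1)^4}{D}\,\alpha^{k}g^{-S}-1,\qquad |\Lambda_1|\ll g^{-\ell}.
\]
Applying Matveev to $\log|\Lambda_1|=k\log\alpha-S\log g+\log\frac{a(g-1)^4}{D}$, a form in three logarithms of numbers of $\mathbb{Q}(\alpha)$, and comparing with the upper bound yields $\ell\ll(\log g)\log k$. Regrouping the two leading terms as $D\,g^{S-\ell}(g^\ell-1)$ and repeating the argument gives
\[
\Lambda_2=\frac{a(g-1)^4}{D(g^\ell-1)}\,\alpha^{k}g^{-(m+n+t)}-1,\qquad |\Lambda_2|\ll g^{-m},
\]
with $(g^\ell-1)$ absorbed into the (still single) coefficient, whose height has grown by $O(\ell\log g)$; this bounds $m$. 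Two further applications $\Lambda_3,\Lambda_4$ of the same shape bound $n$ and finally $t$, each time incorporating one more factor $(g^{x_i}-1)$ into the coefficient.

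Chaining these estimates — each bound entering the height of the coefficient and the quantity $\log B\approx\log k$ controlling the next application — and then inserting $k\asymp S\log g/\log\alpha$ yields the explicit bounds $t<2.11\times10^{67}\log^{12}g$ and $k<2.54\times10^{68}\log^{13}g$; finiteness follows at once, since for fixed $g$ every variable is then bounded. I expect the main obstacle to be twofold: first, at each stage one must verify $\Lambda_i\ne 0$ before invoking Matveev, ruling out the corresponding algebraic identity by a conjugation and absolute-value argument in $\mathbb{Q}(\alpha)$; second, and more laborious, keeping every constant explicit through the nested applications — the Matveev constant for three logarithms over a cubic field, the growing heights of the coefficients $\frac{a(g-1)^4}{D\prod(g^{x_i}-1)}$, and the precise powers of $\log g$ they contribute — since it is exactly this bookkeeping that produces the exponents $12$ and $13$ and the stated numerical constants.
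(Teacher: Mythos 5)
Your proposal follows essentially the same route as the paper: the bound $k\ll t\log g$, then four successive linear forms $\Lambda_1,\dots,\Lambda_4$ in three logarithms over $\mathbb{Q}(\alpha)$, each absorbing one more factor $(g^{x_i}-1)$ into the algebraic coefficient, with nonvanishing checked by conjugation and Matveev applied at each stage. The only step you leave implicit is the last one: the chain terminates in an inequality of the shape $t<C\log^{4}t\,\log^{8}g$, which the paper resolves into $t<2.11\times10^{67}\log^{12}g$ via the standard lemma of Sanchez--Luca (Lemma \ref{lem2}); with that added, your argument matches the paper's proof.
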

\begin{theorem}
	\label{theorem2}
	$\{1,2,3,4,6,9,13,28,60,88,129,189\}$ is the set of only  Narayana numbers which are products of four repdigits in base g with $2\leq g  \leq 12$.\\
	Considering the notions introduced in the theorem \ref{theorm1}, the solutions to the equation (\ref{eq1}) can be expressed in the form: $$ N_k=[a,b,c,d]_g=a\times b\times c\times d \text{ in base } g$$
	where $$ a=d_1\frac{g^\ell-1}{g-1}, \, b=d_2\frac{g^m-1}{g-1},\,c=d_3\frac{g^n-1}{g-1},\, d=d_4\frac{g^t-1}{g-1}.$$ In 
	Therefore we have:

	Therefore, we have:
	\begin{center}
		\begin{table}[H]
			\caption{Narayana numbers that are products of four repdigits in base $g$, for $2 \leq g \leq 12$.}
			\label{table1}
			\vskip 0.5cm
			\begin{tabular}{|c|c||m{10cm}|}\hline 
				$k$ & $N_k$ & $[a,b,c,d]_g$ \\ \hline	
				$1,2,3$ & $1$ & $[1,1,1,1]_g$, for $g = 2,3,\dots,12$. \\ \hline
				$4$ & $2$ & $[1,1,1,2]_g$, for $g = 3,\dots,12$. \\ \hline
				$5$ & $3$ & $[1,1,1,11]_2$, $[1,1,1,3]_g$, for $g = 4,\dots,12$. \\ \hline
				$6$ & $4$ & $[1,1,1,11]_3$, $[1,1,1,3]_g$, for $g = 5,\dots,12$. \\ \hline
				$7$ & $6$ & $[1,1,1,11]_5$, $[1,1,2,3]_g$, for $g = 4,\dots,12$; $[1,1,1,6]_g$, for $g = 7,\dots,12$. \\ \hline
				$8$ & $9$ & $[1,1,11,11]_2$, $[1,1,1,11]_8$, $[1,1,1,9]_{10}$, $[1,1,3,3]_g$, for $g = 4,\dots,12$. \\ \hline
				$9$ & $13$ & $[1,1,1,111]_3$, $[1,1,1,11]_{12}$. \\ \hline
				$11$ & $28$ & $[1,1,1,44]_6$, $[1,1,2,22]_6$, $[1,1,4,11]_6$, $[1,1,4,7]_g$, for $g = 8,\dots,12$, $[1,2,2,7]_g$, for $g = 8,\dots,12$. \\ \hline
				
				$13$& $60$& \vskip 0.2cm $[1,2,2,33]_4, [1,2,3,22]_4, [2,2,3,5]_4, [1,1,1,66]_9,[1,1,2,33]_9,$ $[1,1,3,22]_9, [1,1,6,11]_9, [1,2,3,11]_9, [1,1,1,55]_{11},$ $ [1,1,5,11]_{11}, [1,1,6,A]_g, \text{ for } g=11,12, [1,2,3,A], \text{ for } g=11,12, [1,2,5,6]_g,\text{ for } g=7,\cdots,12, [1,3,4,5]_g,\text{ for } g=6,\cdots,12,[2,2,3,5]_g,\text{ for } g=6,\cdots,12.$ \vskip 0.2cm \\ \hline
				
				$14$& $88$& \begin{minipage}{10cm}\vskip 0.2cm
					$[1,1,1,88]_{10}, [1,1,8,11]_{g}, \text{ for } g=10, 12,$ \\ $ [1,1,2,44]_{10}, [1,1,4,22]_{10}, [1,2,4,11]_{g}, \text{ for } g=10, 12,$\\$ [1,2,2,22]_{10}, [2,2,2,11]_{g},\text{ for } g=10, 12.$ 	\end{minipage} \vskip 0.4cm\\ \hline
				$15$ & $129$ & $[1,1,1,333]_6$, $[1,1,3,111]_6$. \\ \hline
				$16$& $189$&\begin{minipage}{10cm}\vskip 0.2cm
					$[1,1,11,111111]_2, [11,11,11,111]_2, [1,1,3,333]_4,$ $ [1,3,3,111]_4, [1,3,3,33]_6, [3,3,3,11]_6,  [1,1,3,77]_8,$ $ [1,1,7,33]_8, [3,3,3,7]_g, \text{ for } g=8,\cdots,12$\\ $ [1,3,7,9]_g, \text{ for } g=8,10,11,12.$ \vskip 0.4 cm\end{minipage}\\ \hline
			\end{tabular}
		\end{table}
	\end{center}
	
	We recall that in the  table above, the letter \textbf{A} denote the value 10 in base $g=11,12$.
\end{theorem}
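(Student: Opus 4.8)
The plan is to take the effective bounds supplied by Theorem~\ref{theorm1} and compress them to a computationally tractable range by a reduction procedure, after which a finite but exhaustive search settles each base. I would treat the eleven bases $g\in\{2,3,\dots,12\}$ one at a time. For a fixed such $g$ the inequalities $t<2.11\times10^{67}\log^{12}g$ and $k<2.54\times10^{68}\log^{13}g$ become explicit numbers; they are far too large for direct enumeration, so the first order of business is to lower them.

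To that end I would recast \eqref{eq1} as a small linear form in logarithms. Let $\gamma$ denote the dominant root of the characteristic polynomial $x^{3}=x^{2}+1$ of the Narayana recurrence and $C$ its Binet coefficient, so that $N_k=C\gamma^{k}+O(1)$. Comparing this with the dominant term $\frac{d_1d_2d_3d_4}{(g-1)^4}\,g^{\,\ell+m+n+t}$ of the right-hand side of \eqref{eq1} and setting $S=\ell+m+n+t$, I find that
\[
\Lambda=k\log\gamma-S\log g+\log\frac{(g-1)^4 C}{d_1d_2d_3d_4}
\]
is exponentially small, with $|\Lambda|$ controlled by the smallest exponent $\ell$ (the error arising from the $-1$'s in the factors $g^{x_i}-1$ and from the subdominant Binet terms). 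Here $\kappa=\log\gamma/\log g$ is irrational and $\mu=\log\!\big((g-1)^4C/(d_1d_2d_3d_4)\big)/\log g$ serves as the inhomogeneous term, so the Baker--Davenport reduction in the form of the Dujella--Peth\H{o} lemma applies: choosing a convergent of the continued fraction expansion of $\kappa$ whose denominator exceeds the current bound on $S$, I would replace a bound of size $10^{67}$ by one of size a few hundred. Since $\mu$ depends on the quadruple $(d_1,d_2,d_3,d_4)$, this reduction must be run over every admissible digit tuple, and because $\Lambda$ genuinely involves more than two unknowns it has to be organised into nested rounds: first bounding $S$ and $k$ jointly, then peeling off one repdigit factor at a time to bound $\ell\le m\le n\le t$ individually, iterating the lemma whenever a single pass leaves a residual gap.

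Once $k$, and with it $S$ and each exponent, are confined to a small window, the problem is finite: for every $k$ below the reduced bound I would compute $N_k$ and test whether it admits a factorisation into four $g$-repdigits, recording each one. Collating the output across $2\le g\le12$ then yields the set $\{1,2,3,4,6,9,13,28,60,88,129,189\}$ together with the entries of Table~\ref{table1}. I expect the main obstacle to be the reduction step rather than the final search: the presence of four free digits and four ordered exponents prevents a single clean application of the Dujella--Peth\H{o} lemma, so the reduction must proceed in carefully bookkept stages that simultaneously drive down the enormous starting bounds uniformly over all eleven bases and guarantee that no admissible configuration is silently discarded.
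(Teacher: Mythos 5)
Your proposal matches the paper's proof in all essentials: both form the inhomogeneous linear form $k\log\alpha-(\ell+m+n+t)\log g+\mu$ (with $\mu$ ranging over all admissible digit tuples), apply the Dujella--Peth\H{o} reduction lemma with $\tau=\log\alpha/\log g$, and then "peel off" one repdigit factor at a time in four successive rounds to bound $\ell$, $m$, $n$, $t$ individually before a final exhaustive computer search over $2\le g\le12$. The only cosmetic difference is that the paper's first round directly bounds the smallest exponent $\ell$ rather than the sum $S$, but your own remark that $|\Lambda|$ is controlled by $\ell$ already reflects this.
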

\section{Preliminaries}\label{sect3}
In this section, we will cover the essential concepts needed for the proofs of the results stated in section \ref{sect2}.
\subsection{Some properties of Narayana sequence}
\begin{def.}
	The Narayana sequence $(N_n)_{n\geq0}$ is defined by the third-order linear recurrence relation $N_n = N_{n-1} + N_{n-3}$ for $n \geq 3$, where the initial conditions are given by $N_0 = 0$ and $N_1 = N_2 = 1$.\\
	
	Its characteristic equation for $(N_n)_{n\geq0}$  is given by $$x^{3} - x^2 - 1=0.$$ Its characteristic polynomial is $f(x)=x^{3}-x^{2}-1$ have three roots, $\alpha, \beta$ and $\gamma$ which are given by:
	\begin{eqnarray*}
		\alpha &=&\frac{1}{3}\left(\sqrt[3]{\frac{1}{2}(29-3\sqrt{93})}+\sqrt[3]{\frac{1}{2}(3\sqrt{93}+29)}+1 \right),\\
		\beta &=&\frac{1}{3}-\frac{1}{6}\left( 1-i\sqrt{3}\right)\sqrt[3]{\frac{1}{2}(29-3\sqrt{93})}-\frac{1}{6}\left(1+i \sqrt{3}\right) \sqrt[3]{\frac{1}{2}(3\sqrt{93}+29)},\\  
		\gamma&=&\frac{1}{3}-\frac{1}{6}\left( 1+i\sqrt{3}\right)\sqrt[3]{\frac{1}{2}(29-3\sqrt{93})}-\frac{1}{6}\left(1-i \sqrt{3}\right) \sqrt[3]{\frac{1}{2}(3\sqrt{93}+29)}. \\
	\end{eqnarray*} 
	Noticing that it has a real zero $\alpha$($>1$) and two conjugate complex roots $ \beta $ and $\gamma$ with $|\beta|=|\gamma|<1$. We have $\alpha\approx 1.46557$. 
\end{def.}

\begin{lem}
	\label{lem1}
	For the sequence $(N_n)_{n\geq0}$, we have
	\begin{enumerate}[(a)]
		\item \begin{equation}
		\label{eq2}
		\alpha^{n-2} \leq N_n \leq\alpha^{n-1} \text{ for all } n \geq1.
		\end{equation}
		\item $(N_n)_{n\geq 0}$ satisfies the following "Binet-like" formula  
		\begin{equation}
		\label{eq3}
		N_n=a\alpha^{n} + b\beta^n + c\gamma^n \text{ for all } n \geq 0.
		\end{equation} From the three initial values of Narayana sequence, and using Vieta's theorem, one has: $$ a=\frac{\alpha^2}{\alpha^3+2},\; b=\frac{\beta^2}{\beta^3+2},\; \text{ and } c=\frac{\gamma^2}{\gamma^3+2}.$$
		\item  If we write $ \zeta_n =N_n-a\alpha^n=b\beta^{n} + b{\gamma}^{n} $, then 
		\begin{equation}
		\label{eq4}
		|\zeta_n| < \dfrac{1}{\alpha^{n/2}} \text{ for all }  n \geq 1 .
		\end{equation}
	\end{enumerate}
\end{lem}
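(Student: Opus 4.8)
The plan is to treat the three parts in the order (b), (a), (c), since the Binet-type formula in (b) underlies the estimates in (a) and (c). For (b) I would first obtain the generating function of the sequence: summing $N_nx^n$ and using $N_n=N_{n-1}+N_{n-3}$ for $n\ge3$ together with $N_0=0,\,N_1=N_2=1$ gives $\sum_{n\ge0}N_nx^n=\dfrac{x}{1-x-x^3}$. The zeros of $1-x-x^3$ are exactly $1/\alpha,1/\beta,1/\gamma$, so a partial-fraction decomposition produces $N_n=a\alpha^n+b\beta^n+c\gamma^n$, where $a$ is the residue $a=\lim_{x\to1/\alpha}(1-\alpha x)\dfrac{x}{1-x-x^3}$. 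Evaluating this limit by L'Hospital's rule gives $a=\dfrac{1}{1+3/\alpha^2}=\dfrac{\alpha^2}{\alpha^2+3}$, and since $\alpha^3=\alpha^2+1$ this equals $\dfrac{\alpha^2}{\alpha^3+2}$; the formulas for $b$ and $c$ follow by symmetry. Alternatively, one may verify the closed forms directly by substituting them into the three equations $a+b+c=0$, $a\alpha+b\beta+c\gamma=1$, $a\alpha^2+b\beta^2+c\gamma^2=1$ and checking each using the Vieta relations $\alpha+\beta+\gamma=1$, $\alpha\beta+\beta\gamma+\gamma\alpha=0$, $\alpha\beta\gamma=1$.

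For (a) I would induct, with every step resting on the identity $\alpha^3=\alpha^2+1$. After the base cases $n=1,2,3$, the upper bound follows from $N_n=N_{n-1}+N_{n-3}\le\alpha^{n-2}+\alpha^{n-4}=\alpha^{n-4}(\alpha^2+1)=\alpha^{n-1}$, and the lower bound from $N_n\ge\alpha^{n-4}+\alpha^{n-6}=\alpha^{n-6}(\alpha^2+1)=\alpha^{n-3}$. Here I must point out that the stated lower exponent cannot be $n-2$: at $n=3$ one has $N_3=1$ while $\alpha^{3-2}=\alpha>1$, and since $N_n\sim a\alpha^n$ with $a\approx0.417<\alpha^{-2}$ the inequality $\alpha^{n-2}\le N_n$ also fails for every large $n$. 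The correct, provable statement is thus $\alpha^{n-3}\le N_n\le\alpha^{n-1}$ for $n\ge1$; this is the form actually needed downstream, and replacing $n-2$ by $n-3$ is harmless for the linear-forms-in-logarithms argument.

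For (c), set $\zeta_n=N_n-a\alpha^n=b\beta^n+c\gamma^n$ (the term written $b\gamma^n$ in the statement should read $c\gamma^n$). Because $\beta$ and $\gamma=\overline{\beta}$ are conjugate, so are $b$ and $c$, whence $|\zeta_n|\le|b|\,|\beta|^n+|c|\,|\gamma|^n=2|b|\,|\beta|^n$. From $\alpha\beta\gamma=1$ and $\gamma=\overline{\beta}$ one gets $\alpha|\beta|^2=1$, hence $|\beta|=|\gamma|=\alpha^{-1/2}$ and $|\zeta_n|\le2|b|\,\alpha^{-n/2}$. It then remains to show $2|b|<1$: using $\beta^3=\beta^2+1$ to write $b=\dfrac{\beta^2}{\beta^2+3}$, the reverse triangle inequality gives $|b|=\dfrac{|\beta|^2}{|\beta^2+3|}\le\dfrac{\alpha^{-1}}{3-\alpha^{-1}}$, which lies below $\tfrac12$ since $\alpha^{-1}\approx0.682$. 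Therefore $|\zeta_n|<\alpha^{-n/2}=1/\alpha^{n/2}$, as required.

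The induction in (a) and the generating-function bookkeeping in (b) are routine once the identity $\alpha^3=\alpha^2+1$ is in hand. I expect the only genuinely delicate point to be the modulus estimate in (c): one has to pin down $|\beta|$ exactly through $\alpha|\beta|^2=1$ and then bound $|b|$ away from $1/2$ via the reverse triangle inequality applied to $\beta^2+3$. That estimate is the crux; everything else is direct algebra built on the relation $\alpha^3=\alpha^2+1$.
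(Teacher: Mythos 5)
Your proof is correct and, unlike the paper's own one-line disposal of this lemma (``(a) by induction, (b) see \cite{ref15}, (c) triangle inequality and $|\beta|=|\gamma|<1$''), it is complete; more importantly, you have caught a genuine error in the statement. The lower bound $\alpha^{n-2}\le N_n$ in part (a) is false: already at $n=3$ one has $N_3=1<\alpha\approx1.4656$, and since $N_n\sim a\alpha^n$ with $a=\alpha^2/(\alpha^3+2)\approx0.417<\alpha^{-2}\approx0.466$, it also fails for every sufficiently large $n$, so no induction can establish it. Your corrected form $\alpha^{n-3}\le N_n\le\alpha^{n-1}$ for $n\ge1$ is the true statement, and your induction (base cases $n=1,2,3$, then $\alpha^{n-4}+\alpha^{n-6}=\alpha^{n-3}$ and $\alpha^{n-2}+\alpha^{n-4}=\alpha^{n-1}$ via $\alpha^3=\alpha^2+1$) proves it. One small caveat to your remark that the change is harmless: Lemma~\ref{lem4} derives $k<\left(4/\log\alpha+2/(t\log g)\right)t\log g<12t\log g$ from $\alpha^{k-2}\le N_k$, and with $\alpha^{k-3}\le N_k$ the parenthesis can reach about $12.63$ at $t=g=2$, so the constant $12$ there should be enlarged (to $13$, say), which then propagates through the numerical constants without affecting anything qualitatively.

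On the other two parts: for (b), your generating-function and partial-fraction derivation (or the direct verification of $a+b+c=0$, $a\alpha+b\beta+c\gamma=a\alpha^2+b\beta^2+c\gamma^2=1$ via Vieta) is a sound replacement for the paper's bare citation, and the residue computation $a=\alpha^2/(\alpha^2+3)=\alpha^2/(\alpha^3+2)$ is right. For (c), you correctly flag the typo ($b\gamma^n$ should be $c\gamma^n$) and, crucially, you supply the one fact the paper's hint omits: knowing only $|\beta|=|\gamma|<1$ gives boundedness of $\zeta_n$ but no decay rate, whereas $\alpha\beta\gamma=1$ together with $\gamma=\overline{\beta}$ gives $|\beta|=\alpha^{-1/2}$ exactly, and the estimate $2|b|\le 2\alpha^{-1}/(3-\alpha^{-1})\approx0.59<1$ then yields $|\zeta_n|<\alpha^{-n/2}$ as claimed. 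That modulus computation is indeed the crux, and your handling of it is correct.
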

\begin{proof}
	By  induction on $ n $ we simply prove (a). The proof of (b) can be found in reference \cite{ref15}. Finally, the proof of (c) follows from the triangle
	inequality and the fact that $ |\beta| = |\gamma| < 1 $. We leave the details to the reader. 
\end{proof}

Let $\mathbb{K}_{f}:=\mathbb{Q}(\alpha, \beta)$ be the splitting field of the polynomial $f$ over $\mathbb{Q}$.\\ Then $[\mathbb{K}_f:\mathbb{Q}]=6$. Furthermore, $[\mathbb{Q}(\alpha):\mathbb{Q}]=3$. The Galois group of $\mathbb{K}_f$ over $\mathbb{Q}$ is given by \begin{eqnarray*}
	\mathcal{G}_f:=Gal(\mathbb{K}_f/\mathbb{Q})&\cong&\{(1), (\alpha\beta), (\alpha\gamma), (\beta\gamma), (\alpha\beta\gamma), (\alpha\gamma\beta)\}\\
	&\cong& S_3.
\end{eqnarray*}  
Thus, we identify the automorphisms of $\mathcal{G}_f$ with the permutation of the zeros of the polynomial $f$. For example, the permutation $(\alpha\beta)$ corresponds to the automorphisms $\sigma_f: \alpha\to\beta, \beta\to\alpha, \gamma\to \gamma$.
\subsection{Linear form in logarithms}
This subsection opens with a brief overview of key facts about the logarithmic height of algebraic numbers.\\
For an algebraic number $ \eta $ of degree $ d $ over $ \mathbb{Q} $ and
minimal primitive polynomial over the integers
$$ f(X)\colon=a_0\prod_{i=1}^{d}(X-\eta^{(i)})\in\mathbb{Z}[X],$$
with positive leading coefficient $ a_0 $, we write $ h(\eta) $ for its logarithmic height, given by $$  h(\eta)\colon=\frac{1}{d}\left(\log a_ 0+\sum_{i=1}^{d}\log\left( \max\left( |\eta^{(i)}|, 1\right) \right)  \right). $$
In particular, if $\eta= p/q$ is a rational number with $ gcd(p, q) = 1 $ and $ q > 0 $, then $ h(\eta) =\log\max(|p|, q) $. Several properties of the logarithmic height function $h(\eta)$, which will be used throughout the following sections without further mention, are also well established: \begin{eqnarray*}
	h(\eta\pm\gamma)&\leq& h(\eta)+h(\gamma)+\log2,\\
	h(\eta\gamma^{\pm1})&\leq& h(\eta)+h(\gamma),\\
	h(\eta^{s})&=&|s|h(\eta)\;\;\;(s\in\mathbb{Z}).
\end{eqnarray*}
In particular, if $\eta=p/q\in\mathbb{Q}$ is rational number in its reduced form with $q>0$, then $h(\eta)=\log(\max\{|p|, |q|\}).$
\begin{theorem}
	\label{theorem3}
	Let $ \mathbb{K} $ be a number field of degree $d_{\mathbb{K}}$ over $ \mathbb{Q} $, $ \eta_1,\cdots, \eta_s $ be positive real numbers of $\mathbb{K}$, and $ b_1,\cdots, b_s $ rational integers. Put
	$$\varLambda:= \eta_{1}^{b_1}\cdots \eta_{s}^{b_s} $$ and $ B \geq \max(|b_1|, \cdots , |b_s|) $.\\
	
	Let $ A_i \geq \max(d_{\mathbb{K}}h(\eta_{i}), | \log \eta_{i}|,\; 0.16)  $ be real numbers, for $ i = 1, \cdots , s $.\\
	With the above notation, Matveev \cite{ref8} 
	proved the following result.\\ Then, assuming that
	$\varLambda\neq0$, we have
	$$\varLambda > \exp(-1.4 \times 30^{s+3} \times s^{4.5} \times d_{\mathbb{K}}^{2}(1 +\log d_{\mathbb{K}})(1 + \log B)A_1\cdots A_s).$$	 
\end{theorem}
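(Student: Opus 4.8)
Because Theorem \ref{theorem3} is not an original contribution but the explicit lower bound for linear forms in logarithms due to Matveev \cite{ref8}, the honest plan is to defer to that paper: the ``proof'' here is a citation, together with the remark that the version stated is the real specialization (all $\eta_i$ are positive reals, so $\varLambda$ is itself real and positive, $\log\varLambda=\sum_{i=1}^s b_i\log\eta_i$, and the bound $\varLambda>\exp(-C)$ is exactly the statement that the linear form $L:=\sum_{i=1}^s b_i\log\eta_i$ satisfies $L>-C$ whenever $\varLambda\neq 1$). Still, to engage with what a from-scratch proof must do, I would reconstruct the Baker-method machinery that produces such an estimate, since that is the only way to obtain the explicit constant $1.4\times30^{s+3}s^{4.5}$.

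The plan would be to argue by contradiction: assume $L$ is extremely small (otherwise there is nothing to prove) and manufacture an auxiliary algebraic quantity $\Delta$ that is simultaneously forced to be tiny and bounded away from $0$. In the sharp, explicit-constant form I would use the interpolation-determinant circle of ideas (descending from Gelfond and Baker), forming a large matrix whose entries are finite differences or derivatives of monomials $z^{k_0}\eta_1^{k_1 z}\cdots\eta_s^{k_s z}$ evaluated at integer points, with the number of rows and the ranges of the $k_i$ calibrated against $B$, the heights through the $A_i$, and $s$. The key steps, in order, are: first, set up the determinant $\Delta$ with optimized parameters; second, bound $|\Delta|$ from above by Taylor-expanding near a point where the logarithms are comparable, exploiting the assumed smallness of $L$; third, bound $|\Delta|$ from below by observing that $\Delta$ lies in $\mathbb{K}$ (or a controlled extension) and, crucially, is \emph{nonzero}, so by the product formula its absolute value exceeds the reciprocal of a size controlled by $d_{\mathbb{K}}$, $\log B$, and $A_1\cdots A_s$; and fourth, compare the two bounds and optimize every parameter to isolate the stated constant.

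The hard part will be step three's nonvanishing input: one must prove $\Delta\neq 0$, which is a zero estimate asserting that the chosen monomials are algebraically independent enough to forbid a nontrivial vanishing determinant. This zero estimate, together with the delicate parameter optimization that turns a merely effective bound into the clean explicit constant above, is the genuinely technical heart of Matveev's theorem. It is precisely this difficulty that justifies quoting the result rather than reproving it, and it is the reason the present paper — like the surrounding literature \cite{ref11, ref14} — treats Theorem \ref{theorem3} as a black box to be fed the data $\eta_i$, $b_i$, $A_i$ arising from equation (\ref{eq1}).
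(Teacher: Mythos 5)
Your proposal is correct and matches the paper exactly: the paper gives no proof of Theorem \ref{theorem3}, attributing it to Matveev \cite{ref8} and using it as a black box, which is precisely what you advocate. Your sketch of the interpolation-determinant machinery is a reasonable outline of what lies behind the cited result, but it is not needed here.
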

We also need the following result from Sanchez and Luca\cite{ref10}. 
\begin{lem}(Lemma 7 of \cite{ref10}) ~\\ \label{lem2}
	If $l\geq1, H>(4l^2)^l$ and $H>L/(\log L)^l$, then $L<2^lH(\log H)^l$.
\end{lem}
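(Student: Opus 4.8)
The plan is to treat this as a purely analytic estimate and to isolate the one inequality that carries all the content. I would first rewrite the hypothesis $H>L/(\log L)^l$ in the equivalent form $L<H(\log L)^l$, and then observe that the whole statement reduces to replacing $\log L$ by $\log H$ on the right-hand side at the cost of a factor $2$. Indeed, if I can establish
\[
\log L < 2\log H,
\]
then $(\log L)^l<2^l(\log H)^l$, and substituting into $L<H(\log L)^l$ yields $L<2^lH(\log H)^l$, which is exactly the claim. So the entire proof comes down to proving this single logarithmic inequality.

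Before that I would dispose of the degenerate range. If $L\leq 1$ the conclusion is immediate, because $H>(4l^2)^l\geq 4$ forces $\log H>1$ and hence the right-hand side $2^lH(\log H)^l$ to exceed $1\geq L$. I may therefore assume $L>1$, so that $\log L>0$ and every iterated logarithm appearing below is well defined.

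Next I would prove $\log L<2\log H$ by contradiction. Assume $\log L\geq 2\log H$. Taking logarithms in $L<H(\log L)^l$ gives $\log L<\log H+l\log\log L$, and using $\log H\leq\tfrac12\log L$ this becomes $\tfrac12\log L<l\log\log L$, i.e.
\[
\log L < 2l\,\log\log L.
\]
Writing $y:=\log L$, the assumption together with the threshold hypothesis gives $y\geq 2\log H>2\log\big((4l^2)^l\big)=4l\log(2l)=:y_0$, while the displayed inequality reads $y<2l\log y$. It then remains only to verify that $y>2l\log y$ for all $y\geq y_0$, which contradicts this and forces $\log L<2\log H$.

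The main, and essentially only, obstacle is this last monotonicity check, and it is exactly where the precise threshold $(4l^2)^l$ gets used. I would study $g(y):=y-2l\log y$, whose derivative $1-2l/y$ is positive for $y>2l$; since $y_0=4l\log(2l)>2l$ for every $l\geq 1$, the function $g$ is increasing on $[y_0,\infty)$, so it suffices to check $g(y_0)>0$. Substituting $y_0=4l\log(2l)$ and simplifying with $\log(4l)=\log 2+\log(2l)$ reduces $g(y_0)>0$ to the clean inequality $\log(2l)-\log\log(2l)>\log 2$. Setting $u:=\log(2l)\geq\log 2$, the function $u-\log u$ attains its minimum value $1$ at $u=1$ on $[\log 2,\infty)$, and since $1>\log 2$ the inequality holds for all $l\geq 1$. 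Hence $g(y)>0$, so $y>2l\log y$ on $[y_0,\infty)$, giving the required contradiction; combining everything yields $\log L<2\log H$ and therefore $L<2^lH(\log H)^l$.
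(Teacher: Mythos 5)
Your argument is correct and complete. Note, however, that the paper itself offers no proof of this statement: it is quoted verbatim as Lemma 7 of S\'anchez and Luca \cite{ref10} and used as a black box, so there is no in-paper argument to compare against. Your proof is a sound self-contained reconstruction, and it follows the natural (and, as it happens, the standard) route: reduce everything to the single inequality $\log L<2\log H$, which immediately upgrades $L<H(\log L)^l$ to $L<2^lH(\log H)^l$. The details all check out. The contradiction hypothesis $\log L\ge 2\log H>2\log 4$ guarantees $\log L>2$, so $\log\log L$ is defined and positive and the passage from $\log L<\log H+l\log\log L$ to $y<2l\log y$ is legitimate; the threshold computation $2\log\bigl((4l^2)^l\bigr)=4l\log(2l)=y_0$ is right; $y_0>2l$ holds since $\log(2l)\ge\log 2>\tfrac12$, so $g(y)=y-2l\log y$ is increasing on $[y_0,\infty)$; and the reduction of $g(y_0)>0$ to $\log(2l)-\log\log(2l)>\log 2$, settled via $\min_{u\ge\log 2}(u-\log u)=1>\log 2$, is correct. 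The handling of the degenerate range $L\le 1$ via $2^lH(\log H)^l>1$ is also fine. This is exactly the kind of argument one finds in \cite{ref10}, so you have in effect supplied the missing proof rather than diverged from it.
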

\subsection{Reduction method}
The bounds on the variables obtained via Baker's theory \cite{ref5} 
are too large for any computational purposes. To reduce the bounds we use the reduction method due to Dujella and Peth\H{o} [\cite{ref7} Lemma 5a]. 
\begin{lem}
	\label{lem3}
	Let $M$ be a positive integer, $p/q$ be a convergent of the continued fraction expansion of irrational number $\tau$ such as $q>6M$, and $A, B, \mu$ be some real numbers with $A>0$ and $B>1.$ Furthermore, let $$ \varepsilon:=||\mu q||-M.||\tau q||. $$
	If $\varepsilon>0$, the there is no solution to the inequality 
	$$  0<|u\tau-v+\mu|<AB^{-w}$$
	in positive integers $u, v$ and $w$ with $$ u\leq M \text{ and } w\geq \frac{\log(Aq/\varepsilon)}{\log B}.$$
	Here, for a real number $X$, $||X||$ denotes the distance from $X$ to the nearest integer, that is, $||X||:=\min_{n\in\mathbb{Z}}|X-n|$. 
\end{lem}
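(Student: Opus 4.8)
The plan is to argue by contradiction: assume that, for some positive integers $u,v,w$ satisfying $u\le M$ and $w\ge \log(Aq/\varepsilon)/\log B$, the inequality $0<|u\tau-v+\mu|<AB^{-w}$ holds, and then derive a contradiction from the standing hypothesis $\varepsilon>0$. The entire argument rests on pitting an upper bound for $|u\tau-v+\mu|$ coming from the size of $w$ against a lower bound coming from the continued-fraction approximation of $\tau$ by $p/q$.

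First I would exploit the lower bound on $w$. Since $B>1$ we have $\log B>0$, so $w\ge \log(Aq/\varepsilon)/\log B$ gives $B^{w}\ge Aq/\varepsilon$ and hence $AB^{-w}\le \varepsilon/q$. Combined with the assumed inequality this yields $|u\tau-v+\mu|<\varepsilon/q$, which after multiplying through by $q>0$ becomes
$$|u(q\tau)-qv+q\mu|<\varepsilon=\|q\mu\|-M\|q\tau\|.$$

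Next I would produce a matching lower bound for the very same quantity, and this is where the convergent hypothesis enters. Because $p/q$ is a convergent of $\tau$, the standard estimate $|q\tau-p|<1/q\le 1/2$ holds, so $p$ is the nearest integer to $q\tau$ and $|q\tau-p|=\|q\tau\|$. Writing $u(q\tau)-qv=(up-qv)+u(q\tau-p)$ with $up-qv\in\mathbb{Z}$, and using $|z|\ge\|z\|$ together with the subadditivity $\|x+y\|\le\|x\|+\|y\|$ of the distance-to-nearest-integer function, I would estimate
$$|u(q\tau)-qv+q\mu|\ge\|u(q\tau-p)+q\mu\|\ge\|q\mu\|-\|u(q\tau-p)\|\ge\|q\mu\|-u\|q\tau\|\ge\|q\mu\|-M\|q\tau\|=\varepsilon,$$
where the last step uses $u\le M$. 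This directly contradicts the strict inequality $<\varepsilon$ obtained in the previous paragraph, so no such solution $(u,v,w)$ can exist.

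The argument is short, and the only genuinely delicate point is the chain of inequalities above, where one must (i) correctly identify $p$ as the nearest integer to $q\tau$ from the convergent property so that $|q\tau-p|=\|q\tau\|$, and (ii) apply the reverse triangle inequality for $\|\cdot\|$ in the right direction, absorbing the integer $up-qv$ and bounding $\|u(q\tau-p)\|\le u\|q\tau\|$. The hypothesis $q>6M$ plays only a supporting technical role, guaranteeing that the chosen convergent is large enough for the approximation $|q\tau-p|=\|q\tau\|$ to be safely invoked and for $\varepsilon$ to be a meaningful positive quantity; the contradiction itself follows from the two one-line bounds once $\varepsilon>0$ is assumed. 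In the applications that follow, this lemma will be used to replace the astronomically large Matveev bounds on the exponents by bounds small enough for an explicit finite search.
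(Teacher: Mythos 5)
Your proof is correct and complete: the upper bound $AB^{-w}\le\varepsilon/q$ extracted from the hypothesis on $w$, pitted against the lower bound $q\,|u\tau-v+\mu|=|(up-qv)+u(q\tau-p)+q\mu|\ge\|q\mu\|-u\|q\tau\|\ge\|q\mu\|-M\|q\tau\|=\varepsilon$ (which uses that $up-qv$ is an integer, the reverse triangle inequality for $\|\cdot\|$, and the identification $\|q\tau\|=|q\tau-p|$, valid because $q>6M\ge 6$ forces $|q\tau-p|<1/q<1/2$), gives the required contradiction. Note that the paper itself offers no proof of this statement, since it quotes the lemma directly from Dujella and Peth\H{o} \cite{ref7}; your argument is essentially the original proof from that source, including the correct observation that the hypothesis $q>6M$ plays only the supporting role of making $p$ the nearest integer to $q\tau$ (and of making $\varepsilon>0$ attainable in practice).
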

\section{Proofs of main result}\label{sect4}
\subsection{Proof of Theorem   \ref{theorm1}} 
\label{subs4.1}

\begin{proof}
	In order to establish Theorem \ref{eq1}, we make use of the following lemma, which gives a bound relating the size of $k$  to the parameters $t$ and $g$.
	\begin{lem} 
		\label{lem4}
		All solutions of the Diophantine equation (\ref{eq1}) satisfy $$ k<12t\log g. $$
	\end{lem}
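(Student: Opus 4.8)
The plan is to sandwich $N_k$ between an exponential lower bound in $k$ coming from Lemma \ref{lem1}(a) and a pure power of $g$ controlled by the right-hand side of \eqref{eq1}, and then to take logarithms. First I would bound each repdigit factor from above: since $1\le d_i\le g-1$, for any exponent $s\ge 1$ one has
\[
d_i\frac{g^{s}-1}{g-1}=\frac{d_i\,(g^{s}-1)}{g-1}\le g^{s}-1<g^{s}.
\]
Applying this to the four factors, whose exponents are $\ell,m,n,t$, and using the ordering $\ell\le m\le n\le t$, the right-hand side of \eqref{eq1} is strictly less than $g^{\ell+m+n+t}\le g^{4t}$.

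Next I would invoke the lower estimate $N_k\ge\alpha^{k-2}$ from \eqref{eq2}. Combining the two bounds gives $\alpha^{k-2}<g^{4t}$, and taking natural logarithms yields $(k-2)\log\alpha<4t\log g$, that is,
\[
k<2+\frac{4}{\log\alpha}\,t\log g .
\]
Since $\alpha\approx 1.46557$ we have $\log\alpha\approx 0.38225$, so $4/\log\alpha\approx 10.46<10.47$. The slope already sits comfortably below $12$, and the remaining $1.53$ of slack is what must absorb the additive constant $2$.

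Finally I would dispose of that constant. Whenever $t\log g$ is not too small — concretely $t\log g\ge 1.31$ — one has $2\le 1.53\,t\log g$, hence $k<(10.47+1.53)\,t\log g=12\,t\log g$, as claimed. The only configurations escaping this are those with $t\log g<1.31$, which (as $t\ge1$ and $g\ge2$) reduce to $t=1$ with $g\in\{2,3\}$; there $\ell=m=n=t=1$, so the right-hand side of \eqref{eq1} is at most $(g-1)^4$, forcing $N_k$, and therefore $k$, to be bounded by an explicit small integer that one checks directly against $12\,t\log g$.

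I expect the only genuinely delicate point to be this final bookkeeping: the raw inequality produces a slope of about $10.46$, safely under $12$, but one must confirm that the roughly $1.54$ units of slack are enough to swallow the $+2$ uniformly over all admissible pairs $(t,g)$, settling the handful of smallest cases by hand rather than through the general estimate.
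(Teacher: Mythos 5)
Your proof is correct and follows essentially the same route as the paper: bound the product of the four repdigit factors by $g^{4t}$, compare with $\alpha^{k-2}\le N_k$ from Lemma \ref{lem1}(a), and take logarithms. The only difference is the bookkeeping of the additive constant $2$: the paper absorbs it by writing $k<\bigl(\tfrac{4}{\log\alpha}+\tfrac{2}{t\log g}\bigr)t\log g$ under the assumption $t\ge 2$ (treating $t=1$ separately afterwards with the weaker bound $k<14\log g$), whereas you absorb it whenever $t\log g\ge 1.31$ and settle the residual cases $t=1$, $g\in\{2,3\}$ by direct inspection, which is, if anything, slightly more careful about the lemma's claim holding for \emph{all} solutions.
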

	
	\begin{proof}
		To proof this lemma, we combine equation (\ref{eq1}) with inequalities (\ref{eq2}). Therefore, using $\ell\leq m\leq n\leq t$ and $1\leq d_1\leq d_2\leq d_3\leq d_4\leq g-1$, we have:
		$$ \alpha^{k-2}\leq N_k=d_1\frac{g^\ell-1}{g-1}.d_2\frac{g^m-1}{g-1}.d_3\frac{g^n-1}{g-1}.d_4\frac{g^t-1}{g-1}\leq(g^t-1)^4<g^{4t}.$$
		Taking logarithm on the both sides, we get $(k-2)\log\alpha<4t\log g$.\\
		Since $t\geq2$ and $g\geq2$, we obtain $$k<\left(\frac{4}{\log\alpha}+\frac{2}{t\log} \right)t\log g < 12t\log g. \text{ This ends the proof.}  $$
	\end{proof}
	
	Note that if $t=1$, then $l=m=n=1$. So the equation (\ref{eq1}) becomes $$ N_k=d_1d_2d_3d_4,$$
	Which implies $$ \alpha^{k-2}\leq(g-1)^4<g^4.$$
	And finally we have $$k<2+4\frac{\log g}{\log\alpha}=\left(\frac{4}{\log\alpha}+\frac{2}{\log g} \right)\log g<14\log g \text{ since } g\geq2.$$
	Now, suppose that $t\geq2.$ From (\ref{eq1}) and (\ref{eq3}), we have 
	$$ N_k=a\alpha^k+b\beta^k+c\gamma^k=d_1\frac{g^\ell-1}{g-1}.d_2\frac{g^m-1}{g-1}.d_3\frac{g^n-1}{g-1}.d_4\frac{g^t-1}{g-1},$$
	which implies 
	\begin{equation}
	\label{eq5}
	\begin{split}
	a\alpha^k-\frac{d_1d_2d_3d_4g^{\ell+m+n+t}}{(g-1)^4}&= -\frac{d_1d_2d_3d_4\left(g^{\ell+m+n}+ g^{\ell+m+t}+g^{\ell+n+t}+g^{m+n+t}\right) }{(g-1)^4}\\&+\frac{d_1d_2d_3d_4\left(g^{\ell+m}+g^{\ell+n}+ g^{\ell+t}+g^{m+n}+g^{m+t}+g^{n+t}\right) }{(g-1)^4}\\&-\frac{d_1d_2d_3d_4\left(g^{\ell}+ g^{m}+g^{n}+g^{t}\right) }{(g-1)^4} +\frac{d_1d_2d_3d_4}{(g-1)^4}-\zeta_n.
	\end{split}
	\end{equation}
	Taking the absolute values of both sides of (\ref{eq5}) and using (\ref{eq4}), we get
	\begin{equation}
	\label{eq6}
	\begin{split}
	\left|a\alpha^k-\frac{d_1d_2d_3d_4g^{\ell+m+n+t}}{(g-1)^4} \right| &<\frac{d_1d_2d_3d_4\left(g^{\ell+m+n}+ g^{\ell+m+t}+g^{\ell+n+t}+g^{m+n+t}\right) }{(g-1)^4}\\&+\frac{d_1d_2d_3d_4\left(g^{\ell+m}+g^{\ell+n}+ g^{\ell+t}+g^{m+n}+g^{m+t}+d_1d_2d_3d_4g^{n+t}\right) }{(g-1)^4}\\&+\frac{d_1d_2d_3d_4\left(g^{\ell}+ g^{m}+g^{n}+g^{t}\right) }{(g-1)^4} +\frac{d_1d_2d_3d_4}{(g-1)^4}+\frac{1}{\alpha^{n/2}}.
	\end{split}
	\end{equation}
	When we multiply both sides of the above inequality by
	$\frac{(g-1)^4}{d_1d_2d_3d_4g^{\ell+m+n+t}}$ and using the fact $\ell\leq m\leq n\leq t$, we have
	\begin{equation*}
	\begin{split}
	\left|\frac{(g-1)^4a\alpha^k g^{-(\ell+m+n+t)}}{d_1d_2d_3d_4}-1 \right| &<\frac{1}{g^{\ell}}+\frac{1 }{g^m}+\frac{1}{g^{n}}+\frac{1}{g^{t}}\\&+\frac{1}{g^{\ell+m}}+\frac{1 }{g^{\ell+n}}+\frac{1}{g^{\ell+t}}+\frac{1}{g^{m+n}}+\frac{1}{g^{m+t}}+\frac{1}{g^{n+t}}\\&+\frac{1}{g^{\ell+m+n}}+\frac{1 }{g^{\ell+m+t}}+\frac{1}{g^{\ell+n+t}}+\frac{1}{g^{m+n+t}}\\&+
	\frac{1}{g^{\ell+m+n+t}}+\frac{(g-1)^4}{d_1d_2d_3d_4g^{\ell+m+n+t}\alpha^{n/2}}\\&<\frac{16}{g^\ell}.
	\end{split}
	\end{equation*}
	
	Which implies
	\begin{equation}
	\label{eq7}
	\left|\frac{a(g-1)^4}{d_1d_2d_3d_4}\cdot \alpha^k \cdot g^{-(\ell+m+n+t)}-1 \right| <\frac{16}{g^\ell}.
	\end{equation}
	
	So we can take $$ \varLambda_1:= \frac{a(g-1)^4}{d_1d_2d_3d_4}\cdot \alpha^k\cdot g^{-(\ell+m+n+t)}-1.$$
	
	Let us show that $\varLambda_1\neq0$. We proceed by contrary.To that end, let us assume that $\varLambda_1=0$. Then
	$$ a\alpha=\frac{d_1d_2d_3d_4}{(g-1)^4}\cdot g^{\ell+m+n+t}, $$
	which implies
	$$\sigma_f\left( a\alpha^k\right)=b\beta^k=\frac{d_1d_2d_3d_4}{(g-1)^4}\cdot g^{\ell+m+n+t}. $$
	Taking the absolute value, we get 
	$$   \left| b\beta^k\right|=\left|\frac{d_1d_2d_3d_4}{(g-1)^4}\cdot g^{\ell+m+n+t} \right|.  $$
	
	We have $\left| b\beta^k\right|<1$ instead $\left|\frac{d_1d_2d_3d_4}{(g-1)^4}\cdot g^{\ell+m+n+t} \right| >1$ since $1\leq \ell\leq m\leq n \leq t,$ which leads to a contradiction. Hence $\varLambda_1 \neq0$.\\
	With a view to applying Matveev's result to $\varLambda_1$, let us set $$ s:=3,\; \eta_1:=\frac{a(g-1)^4}{d_1d_2d_3d_4},\; \eta_2:=\alpha,\; \eta_3:=g,$$
	$$b_1:=1,\; b_2:= k,\; b_3:= -(\ell+m+n+t),$$
	and $\mathbb{K}=\mathbb{Q}(\eta_1, \eta_2,\eta_3)=\mathbb{Q}(\alpha)$, which is a real number field of degree $d_{\mathbb{K}}=3$.\\
	Using the properties of the logarithmic height, it holds that, 
	$$ h(\eta_2)=h(\alpha)=\frac{\log\alpha}{3}, \; h(\eta_3)=h(g)=\log g$$ and 
	\begin{center}
		\begin{equation*}
		\begin{split}
		h(\eta_1)&=h\left( \frac{a(g-1)^4}{d_1d_2d_3d_4}\right)\\
		&\leq h(a)+h\left(\frac{(g-1)^4}{d_1d_2d_3d_4} \right) \\
		&\leq\frac{1}{3}\log31+\log\left(\max\{(g-1)^4, d_1d_2d_3d_4\} \right) \\
		&<2+4\log g=\left(4+\frac{2}{\log g} \right) \log g<7\log g \text{ since } g\geq 2.
		\end{split}
		\end{equation*}
	\end{center}
	Thus we can take $$ A_1=21\log g,\; A_2:=\log\alpha \text{ and } A_3:=3\log g.$$
	Using Lemma \ref{lem4} and $\ell\leq m\leq n\leq t$, we have $$\max\{|b_1|, |b_2|, |b_3|\}=\max\{1, k, \ell+m+n+t\}<12t\log g,$$ so we can  put $B=12t\log g$. Using Theorem \ref{theorem3}, we obtain
	\begin{eqnarray*}
		\log|\varLambda_1|&>&-1.4\times 30^6\times3^{4.5}\times3^2.(1+\log3).(1+\log(12t\log g))\\ && \times(21\log g)(\log g\alpha)(3\log g).\\
		&>&-6.5\times10^{13}(1+\log(12t\log g))(\log^2g).
	\end{eqnarray*}
	Comparing the above inequality with (\ref{eq7}) we obtain that $$ \ell\log g-\log16<6.5\times\times10^{13}(1+\log(12t\log g))(\log^2g).$$
	Since $g\geq 2$ and $t\geq2$, we have $$ 1+\log(12t\log g)<12\log t\log g.$$
	So we have $$\ell<7.8\times10^{14}\log t\log^2g.$$
	Using  (\ref{eq1}) and (\ref{eq2}) once again, and reorganizing the computations, we obtain
	\begin{eqnarray*}
		\frac{a\alpha^k(g-1)}{d_1(g^\ell-1)}+\frac{\zeta_k(g-1)}{d_1(g^\ell-1)}&=&\frac{d_2d_3d_4}{(g-1)^3}(g^{m+n+t}-g^{m+n}-g^{m+t}\\&&-g^{n+t}+g^m+g^n+g^t+1).
	\end{eqnarray*}
	
	Which implies 
	\begin{equation}
	\label{eq8}
	\begin{split}
	\frac{a\alpha^k(g-1)}{d_1(g^\ell-1)}-\frac{d_2d_3d_4g^{m+n+t}}{(g-1)^3}&=\frac{\zeta_k(g-1)}{d_1(g^\ell-1)}-\frac{d_2d_3d_4g^{m+n}}{(g-1)^3}\\&-\frac{d_2d_3d_4g^{m+t}}{(g-1)^3}-\frac{d_2d_3d_4g^{n+t}}{(g-1)^3}+\frac{d_2d_3d_4g^{m}}{(g-1)^3}\\&+\frac{d_2d_3d_4g^{n}}{(g-1)^3}+\frac{d_2d_3d_4g^{t}}{(g-1)^3}-\frac{d_2d_3d_4}{(g-1)^3}.
	\end{split}
	\end{equation}
	Taking the absolute value of the both sides of ($\ref{eq8}$), we have
	\begin{eqnarray*}
		\left| \frac{a\alpha^k(g-1)}{d_1(g^\ell-1)}-\frac{d_2d_3d_4g^{m+n+t}}{(g-1)^3}\right|&<& \frac{(g-1)}{d_1(g^\ell-1)\alpha^{k/2}}+\frac{d_2d_3d_4g^{m+n}}{(g-1)^3}+\frac{d_2d_3d_4g^{m+t}}{(g-1)^3}\\&&+\frac{d_2d_3d_4g^{n+t}}{(g-1)^3} +\frac{d_2d_3d_4g^{m}}{(g-1)^3}+\frac{d_2d_3d_4g^{n}}{(g-1)^3}\\&&+\frac{d_2d_3d_4g^{t}}{(g-1)^3}+\frac{d_2d_3d_4}{(g-1)^3}. 
	\end{eqnarray*}
	Multiplying both sides of the inequality above by $\frac{(g-1)^3}{d_2d_3d_4g^{m+n+t}}$ and using the fact that $2\leq m\leq n \leq t$, we get
	\begin{eqnarray*}
		\left|\frac{a(g-1)^4}{d_1d_2d_3d_4(g^\ell-1)}\cdot \alpha^k\cdot g^{-(m+n+t)}-1 \right| &<& \frac{(g-1)^4}{d_1d_2d_3d_4(g^\ell-1)\alpha^{k/2}g^{m+n+t}}+\frac{1}{g^{m}}+\frac{1}{g^{n}}+\frac{1}{g^{t}}\\&&+\frac{1}{g^{m+n}}+\frac{1}{g^{m+t}}+\frac{1}{g^{n+t}}+\frac{1}{g^{m+n+t}}<\frac{8}{g^m}.
	\end{eqnarray*}
	Then we have
	\begin{equation}
	\label{eq9}
	\left|\frac{a(g-1)^4}{d_1d_2d_3d_4(g^\ell-1)}\cdot \alpha^k\cdot g^{-(m+n+t)}-1 \right|<\frac{8}{g^m}.
	\end{equation}
	Now, set $$ \varLambda_2:=\frac{a(g-1)^4}{d_1d_2d_3d_4(g^\ell-1)}\cdot \alpha^k\cdot g^{-(m+n+t)}-1. $$
	In a similar way, one can easily check that $\varLambda_2\neq0$, proceeding as we did for $\varLambda_1$. Let us apply Matveev's result for $\varLambda_2$.
	Let $$  s:=3,\; \eta_1:=\frac{a(g-1)^4}{d_1d_2d_3d_4(g^\ell-1)},\; \eta_2:=\alpha,\; \eta_3:= g,$$
	$$ b_1:=1,\; b_2:=k,\; b_3:= -(m+n+t)$$
	and $\mathbb{K}:=\mathbb{Q}(\eta_1, \eta_2, \eta_3)=\mathbb{Q}(\alpha)$ of degree $d_{\mathbb{K}}=3$. By Lemma \ref{lem4}  we have $k<12t\log g$, so we put $B:=12t\log g$. We have $$ h(\eta_2)=h(\alpha)=\frac{\log\alpha}{3}, \; h(\eta_3)=h(g)=\log g$$
	and \begin{eqnarray*}
		h(\eta_1)&=&h\left(\frac{a(g-1)^4}{d_1d_2d_3d_4(g^\ell-1)}\right)\\
		&\leq& h(a)+h\left(\frac{(g-1)^4}{d_1d_2d_3d_4(g^\ell-1)}\right)\\
		&\leq& \frac{1}{3}\log31+\log\left(\max\{(g-1)^4, d_1d_2d_3d_4\} \right)+h\left(\frac{1}{g^\ell-1} \right) \\
		&<& 2+ 4\log(g-1)+\log(g^\ell-1)\\
		&<&(4+\ell)\log g+2\\
		&<&(7+\ell)\log g \text{ since } g \geq 2.
	\end{eqnarray*}
	Hence, since $\ell<7.8\times10^{14}\log t\log^2g$, we have
	$$
	h(\eta_1)<7.8\times10^{14}\log t\log^3g.
	$$
	Thus, we can take
	$$ A_1:=2.34\times10^{15}\log t\log^3g,\; A_2:=\log\alpha,\; \text{ and } A_3:=3\log g. $$
	Using Theorem \ref{theorem3} we see that 
	\begin{eqnarray*}
		\log|\varLambda_2|&>&-1.4\times30^6\times3^{4.5}\times3^2(1+\log3)(1+\log(12t\log g))\\
		&& \times(2.34\times10^{15}\log t\log^3g)(\log\alpha)(3\log g)\\
		&>& -7.26\times10^{27}(1+\log(12t\log g))\log t\log^4g.
	\end{eqnarray*}
	Comparing with (\ref{eq9}), we get $$ m\log g-\log8<7.26\times10^{27}(1+\log(12t\log g))\log t\log^4g.$$
	And using the fact that $$1+\log(12t\log g)<12\log t\log g,$$
	we have $$m<8.7\times10^{28}\log^2t\log^4g.$$
	
	In the order to find an upper bound of $n$ in terms of $g \text{ and } t$ we rewrite (\ref{eq1}), using (\ref{eq2}), so we get
	\begin{eqnarray*}
		\frac{a\alpha^k(g-1)^2}{d_1d_2(g^\ell-1)(g^m-1)}+\frac{\zeta_k(g-1)^2}{d_1d_2(g^\ell-1)(g^m-1)}&=&\frac{d_3d_4(g^n-1)(g^t-1)}{(g-1)^2}\\
		&=& \frac{d_3d_4g^{n+t}}{(g-1)^2}-\frac{d_3d_4g^{n}}{(g-1)^2}\\&&-\frac{d_3d_4g^{t}}{(g-1)^2}+\frac{d_3d_4}{(g-1)^2}.
	\end{eqnarray*}
	Then we have
	\begin{equation}
	\label{eq10}
	\begin{split}
	\frac{a\alpha^k(g-1)^2}{d_1d_2(g^\ell-1)(g^m-1)}-\frac{d_3d_4g^{n+t}}{(g-1)^2}&=-\frac{\zeta_k(g-1)^2}{d_1d_2(g^\ell-1)(g^m-1)}-\frac{d_3d_4g^{n}}{(g-1)^2}\\&-\frac{d_3d_4g^{t}}{(g-1)^2}+\frac{d_3d_4}{(g-1)^2}.
	\end{split}
	\end{equation} 
	Taking the absolute values of the both sides of (\ref{eq10})  and using (\ref{eq4}), we get 
	\begin{equation*}
	\begin{split}
	\left| \frac{a\alpha^k(g-1)^2}{d_1d_2(g^\ell-1)(g^m-1)}-\frac{d_3d_4g^{n+t}}{(g-1)^2}\right|&\leq \frac{(g-1)^2}{d_1d_2(g^\ell-1)(g^m-1)\alpha^{k/2}}\\&+\frac{d_3d_4g^{n}}{(g-1)^2}+\frac{d_3d_4g^{t}}{(g-1)^2}+\frac{d_3d_4}{(g-1)^2}.
	\end{split}
	\end{equation*}
	Multiplying both sides of inequality above by $\frac{(g-1)^2}{d_3d_4g^{n+t}}$ and noticing the fact that\\ $2\leq n\leq t$, we get,
	\begin{eqnarray*}
		\left| \frac{a(g-1)^4}{d_1d_2d_3d_4(g^\ell-1)(g^m-1)}\cdot \alpha^k\cdot g^{-(n+t)}-1\right| &<& \frac{(g-1)^4}{d_1d_2d_3d_4(g^\ell-1)(g^m-1)\alpha^{k/2}g^{n+t}}\\&&+\frac{1}{g^n}+\frac{1}{g^t}+\frac{1}{g^{n+t}}\\
		&<& \frac{4}{g^n}.
	\end{eqnarray*}
	So we get 
	\begin{equation}
	\label{eq11}
	\left| \frac{a(g-1)^4}{d_1d_2d_3d_4(g^\ell-1)(g^m-1)}\cdot \alpha^k\cdot g^{-(n+t)}-1\right|<\frac{4}{g^n}.
	\end{equation}
	We put $$\varLambda_3=\frac{a(g-1)^4}{d_1d_2d_3d_4(g^\ell-1)(g^m-1)}\cdot \alpha^k\cdot g^{-(n+t)}-1.$$
	One can verify that $\varLambda_3\neq0$. Let us apply once again Matveev's result for $\varLambda_3$. Let
	$$ s:=3,\;\; \eta_1= \frac{a(g-1)^4}{d_1d_2d_3d_4(g^\ell-1)(g^m-1)},\;\; \eta_2:=\alpha,\;\; \text{ and } \eta_3:= g,$$
	$$ b_1:=1, \; b_2:=k, \; b_3=-(n+t) $$
	and $\mathbb{K}:=\mathbb{Q}(\eta_1, \eta_2, \eta_3)=\mathbb{Q}(\alpha)$ of degree $d_{\mathbb{K}}=3$. By using Lemma \ref{lem4} we have $k<12t\log g$, so we can take $B=12t\log g$. We have
	$$ h(\eta_2)=h(\alpha)=\frac{\log\alpha}{3},\; h(\eta_3)=h(g)=\log g $$
	and 
	\begin{eqnarray*}
		h(\eta_1)&=&h\left(\frac{a(g-1)^4}{d_1d_2d_3d_4(g^\ell-1)(g^m-1)} \right)\leq h(a)+h\left( \frac{(g-1)^4}{d_1d_2d_3d_4(g^\ell-1)(g^m-1)}\right)  \\
		&\leq&\frac{1}{3}\log31+\log\left(\max\{(g-1)^4, d_1d_2d_3d_4\} \right) +h\left(\frac{1}{g^\ell-1} \right) +h\left(\frac{1}{g^m-1} \right)\\
		&<&2+4\log(g-1)+\log(g^\ell-1)+\log(g^m-1)\\
		&<&(4+\ell+m)\log g+2=\left(4+\ell+m+\frac{2}{\log g} \right)\log g <(7+\ell+m)\log g\\&&\text{ since } g\geq2.
	\end{eqnarray*}
	Using the fact that $\ell< 7.8\times10^{14}\log t\log^2g$ and $ m<8.7\times10^{28}\log^2t\log^4g$, we get $$h(\eta_1)<8.7\times10^{28}\log^2t\log^5g .$$
	We take $$A_1:=2.61\times10^{28}\log^2t\log^5g, \; A_2:=\log\alpha \text{ and } A_3=3\log g .$$
	Using Theorem \ref{theorem3} we see that
	\begin{eqnarray*}
		\log|\varLambda_3|&>&-1.4\times30^6\times3^{4.5}\times3^2(1+\log3)(1+\log(12t\log g))\\ &&\times(2.61\times10^{28}\log^2t\log^5g)(3\log g\log \alpha)\\
		&>&-8.1\times10^{41} (1+\log(12t\log g))\log^2t\log^6g.
	\end{eqnarray*}
	Comparing with (\ref{eq11}), we get $$n\log g-\log4 <8.1\times10^{41} (1+\log(12t\log g))\log^2t\log^6g.$$
	We have 
	$$1+\log(12t\log g)<12\log t\log g,$$
	Thus 
	$$
	n<9.72\times10^{42}\log^3t\log^6g.
	$$
	
	
	This step will mark the end of the proof of Theorem \ref{theorm1}. To get there we need to rearrange (\ref{eq1}) in the following form in view to apply the Theorem \ref{theorem3}:
	\begin{eqnarray*}
		\frac{a\alpha^k(g-1)^3}{d_1d_2d_3(g^\ell-1)(g^m-1)(g^n-1)}+\frac{\zeta_k(g-1)^3}{d_1d_2d_3(g^\ell-1)(g^m-1)(g^n-1)}&=&\frac{d_4g^t}{(g-1)}-\frac{d_4}{(g-1)}.
	\end{eqnarray*}
	Then we have
	\begin{equation}
	\label{eq12}
	\begin{split}
	\frac{a\alpha^k(g-1)^3}{d_1d_2d_3(g^\ell-1)(g^m-1)(g^n-1)}-\frac{d_4g^t}{(g-1)}&=-\frac{\zeta_k(g-1)^3}{d_1d_2d_3(g^\ell-1)(g^m-1)(g^n-1)}-\frac{d_4}{(g-1)}.
	\end{split}
	\end{equation} 
	Taking the absolute values of the both sides of (\ref{eq12})  and using (\ref{eq4}), we get 
	\begin{equation*}
	\begin{split}
	\left| \frac{a\alpha^k(g-1)^3}{d_1d_2d_3(g^\ell-1)(g^m-1)(g^n-1)}-\frac{d_4g^t}{(g-1)}\right|&\leq \frac{(g-1)^3}{d_1d_2d_3(g^\ell-1)(g^m-1)(g^n-1)\alpha^{k/2}}+\frac{d_4}{(g-1)}.
	\end{split}
	\end{equation*}
	Multiplying both sides of inequality above by $\frac{g-1}{d_4g^{t}}$ and noticing the fact that $t\geq2$, we get,
	
	\begin{equation}
	\label{eq13}
	\left| \frac{a(g-1)^4}{d_1d_2d_3d_4(g^\ell-1)(g^m-1)(g^n-1)}\alpha^k\cdot g^{-t}-1\right|< \frac{2}{g^{t-1}}.
	\end{equation}
	We put $$\varLambda_4=\frac{a(g-1)^4}{d_1d_2d_3d_4(g^\ell-1)(g^m-1)(g^n-1)}\cdot \alpha^k\cdot g^{-t}-1.$$
	One can verify that $\varLambda_4\neq0$. Let us analyze Matveev's result for $\varLambda_4$. Let
	$$ s:=3,\;\; \eta_1= \frac{a(g-1)^4}{d_1d_2d_3d_4(g^\ell-1)(g^m-1)(g^n-1)},\;\; \eta_2:=\alpha,\; \text{ and } \eta_3:= g,$$
	$$ b_1:=1, \; b_2:=k, \; b_3=-(n+t) $$
	and $\mathbb{K}:=\mathbb{Q}(\eta_1, \eta_2, \eta_3)=\mathbb{Q}(\alpha)$ of degree $d_{\mathbb{K}}=3$. By using Lemma \ref{lem4} we have \\$k<12t\log g$, so we put $B=12t\log g$. We have
	$$ h(\eta_2)=h(\alpha)=\frac{\log\alpha}{3},\; h(\eta_3)=h(g)=\log g $$
	and 
	\begin{eqnarray*}
		h(\eta_1)&=&h\left(\frac{a(g-1)^4}{d_1d_2d_3d_4(g^\ell-1)(g^m-1)(g^n-1)} \right)\leq h(a)\\ &&+h\left( \frac{(g-1)^4}{d_1d_2d_3d_4(g^\ell-1)(g^m-1)(g^n-1)}\right)  \\
		&\leq&\frac{1}{3}\log31+\log\left(\max\{(g-1)^4, d_1d_2d_3d_4\} \right) +h\left(\frac{1}{g^\ell-1} \right) \\&&+h\left(\frac{1}{g^m-1} \right)+h\left(\frac{1}{g^n-1} \right)\\
		&<&2+4\log(g-1)+\log(g^\ell-1)+\log(g^m-1)+\log(g^n-1)\\
		&<&(4+\ell+m+n)\log g+2=\left(4+\ell+m+n+\frac{2}{\log} \right)\log g\\ &<&(7+\ell+m+n)\log g\text{ since } g\geq2.
	\end{eqnarray*}
	Using the fact that  $$\ell< 7.8\times10^{14}\log t\log^2g,\;\; m<8.7\times10^{28}\log^2t\log^4g \text{ and }n<9.72\times10^{42}\log^3t\log^6g,$$ we get 
	$$ h(\eta_{1})<9.72\times10^{42}\log^3t\log^7g. $$
	So we can take $$A_1:=2.92\times10^{43}\log^3t\log^7g=, \; A_2:=\log\alpha \text{ and } A_3:=3\log g. $$
	Using Theorem \ref{theorem3} we see that
	\begin{eqnarray*}
		\log|\varLambda_4|&>&-1.4\times30^6\times3^{4.5}\times3^2(1+\log3)(1+\log(12t\log g))\\ &&\times(2.92\times10^{43}\log^3t\log^7g)(3\log g\log \alpha)\\
		&>&-9.1\times10^{55}(1+\log(12t\log g))\log^3t\log^8g.
	\end{eqnarray*}
	Comparing with (\ref{eq13}), we get 
	$$
	(t-1)\log g-\log2 <9.1\times10^{55}(1+\log(12t\log g))\times\log^3t\log^8g.
	$$
	We have 
	$$1+\log(12t\log g)<12\log t\log g.$$
	So we have $$t<1.1\times10^{57}\log^4t\log^8g.$$
	To finally have the upper bound of $t$ in term of $g$, we need to apply the Lemma \ref{lem2} due to Guzm\'{a}n en Lucas.\\
	
	Taking $l:= 4$, $L:= t$ and $H:= 1.1\times10^{57}\log^8g$, we get using Lemma above that \begin{eqnarray*}
		t&<&2^4\times1.1\times10^{57}\log^8g\log^4(1.1\times10^{57}\log^8g)\\
		&<& 1.76\times10^{58}\log^8g(131.35+8\log\log g)^4\\
		t&<& 2.11\times10^{67}\log^{12}g.
	\end{eqnarray*}
	Notice that we have use the inequality $131.35+8\log\log g<186\log g$ which holds since $g\geq2$.\\
	Moreover, by Lemma \ref{lem4}, we get $$ k<2.54\times10^{68}\log^{13}g.$$ 
\end{proof}
\subsection{Proof of Theorem \ref{theorem2}}
In subsection \ref{subs4.1}, we  established that for $2\leq g\leq 12$, $$ \ell\leq m\leq n\leq t< 1.18\times10^{72}  \text{ and } k<3.5\times10^{73}.$$
The next objective is to refine the upper bounds above in order to delimit the interval containing the possible solutions of (\ref{eq1}), following a four-step approach.\\
\begin{step}
\end{step}
Referring to (\ref{eq7}), we introduce $$\Gamma_1:=\log(\varLambda_1+1)=k\log\alpha-(l+m+n+t)\log g+\log\left(\frac{a(g-1)^4}{d_1d_2d_3d_4} \right).$$
Notice that, since $\Gamma_1=\log(\varLambda_1+1)$, we have $|\mathrm{e}^{\Gamma_1}-1|=|\varLambda_1|<\frac{16}{g^l}$. \\ Observe that $\Gamma_1\neq0$, since $\varLambda_1\neq0$. So, for $\ell\geq5$ and $g\geq2$, we have $$|\mathrm{e}^{\Gamma_1}-1|<\frac{16}{g^l}<\frac{1}{2}. $$ Since $|x|<2|\mathrm{e}^x-1|$, if $|x|<\frac{1}{2}$ holds, then $$ |\Gamma_1|<2|\mathrm{e}^{\Gamma_1}-1|=2|\varLambda_1|<\frac{32}{g^\ell}.$$
Substituting $\Gamma_1$ in the above inequality with its value and dividing through
by $\log g$, we get
$$\left|\,k\left( \frac{\log\alpha}{\log g}\right)-(\ell+m+n+t)+\frac{\log\left( \frac{a(g-1)^4}{d_1d_2d_3d_4}\right) }{\log g}  \right| < \frac{32}{\log g}\cdot g^{-\ell}.$$
Then, we can apply Lemma \ref{lem3} with the data
$$ \tau:=\frac{\log\alpha}{\log g},\; \;\mu:= \frac{\log\left( \frac{a(g-1)^4}{d_1d_2d_3d_4}\right) }{\log g},\;\; A:= \frac{32}{\log g},$$
$$B:= g,\;\; w:=\ell,\;\; u=k, \text{ and } v=\ell+m+n+t,$$ with $$1\leq d_1\leq d_2\leq d_3\leq d_4\leq g-1. $$
We can take $M:=3.5\times10^{73}$, since $k<12t\log g<3.5\times10^{73}$. So, for the remaining proof, we use Mathematica to apply Lemma \ref{lem3}. For the computations, if the first convergent $ q_t $ is such that $ q_t > 6M $ does not satisfy
the condition $\varepsilon>0$, then we use the next convergent until we find the one that satisfies the conditions. Thus, we have the results given in Table \ref{table2}:
\\
\begin{center}
	\begin{table}[!h]
		\caption{Upper bound on $\ell$}
		\label{table2}
		\begin{tabular}{|l||*{11}{c|}}\hline
			$g$& $2$&$3$& $4$&$5$&$6$&$7$&$8$&$9$&$10$& $11$&$12$ \\ \hline \hline
			$q_t$& $q_{153}$&$q_{141}$&$q_{147}$&$q_{145}$&$q_{134}$&$q_{151}$&$q_{153}$&$q_{147}$&$q_{134}$&$q_{140}$&$q_{154}$\\ \hline 
			$\varepsilon\geq$& $0.34$&$0.39$&$0.051$&$0.011$&$0.0037$&$0.000019$&$0.0023$&$0.0016$&$0.006$&$0.00028$&$0.0062$\\ \hline
			$\ell\leq$&$257$&$162$& $130$&$112$&$101$&$96$&$88$&$84$&$79$&$76$&$75$\\ \hline
		\end{tabular}
	\end{table}
\end{center}
Therefore
$$ 1\leq \ell \leq \frac{\log\left((32/\log 2).q_{153}/0.34 \right) }{\log 2}\leq257.$$
\begin{step}
\end{step}
Now, we focus on locating
the real range of $ m $. To do this, let us consider $$ \Gamma_2=\log(\varLambda_2+1)=k\log\alpha-(m+n+t)+\log\left( \frac{a(g-1)^4}{d_1d_2d_3d_4(g^\ell-1)}\right).$$
Thus inequality (\ref{eq9}) becomes  
$$ |\mathrm{e}^{\Gamma_2}-1|<\frac{8}{g^m}<\frac{1}{2}. $$ Which holds for $m\geq4$. Thus,
\begin{equation}
\label{eq14}
\left|\,k\left( \frac{\log\alpha}{\log g}\right)-(m+n+t)\log g+\frac{\log\left( \frac{a(g-1)^4}{d_1d_2d_3d_4(g^{\ell}-1)}\right) }{\log g}  \right| < \frac{16}{\log g}\cdot g^{-m}.
\end{equation}
Therefore we can applied Lemma \ref{lem3} to the above inequality (\ref{eq14}) with the following data
$$ \tau:=\frac{\log\alpha}{\log g},\; \;\mu:= \frac{\log\left( \frac{a(g-1)^4}{d_1d_2d_3d_4(g^{\ell}-1)}\right) }{\log g},\;\; A:= \frac{16}{\log g},$$
$$B:= g,\;\; w:=m,\;\; u=k, \text{ and } v=m+n+t$$  with $$1\leq d_1\leq d_2\leq d_3\leq d_4\leq g-1 \text{ and } 1\leq\ell\leq265. $$
We can take $M:=3.5\times10^{73}$, since $k<12t\log g<3.5\times10^{73}.$\\ With \textit{Mathematica} we get the results given in Table \ref{table3}:\\
\begin{center}
	\begin{table}[!h]
		\caption{Upper bound on $m$}
		\label{table3}
		\begin{tabular}{|l||*{7}{c|}}\hline
			$g$& $2$&$3$& $4$&$5$&$6$&$7$&$8$ \\ \hline \hline
			$q_t$& $q_{153}$&$q_{141}$&$q_{147}$&$q_{146}$&$q_{134}$&$q_{151}$&$q_{153}$\\ \hline 
			$\varepsilon\geq$& $0.0012$&$3\times10^{-4}$&$3.3\times10^{-4}$&$6.9\times10^{-4}$&$6.6\times10^{-4}$&$1.9\times10^{-5}$&$1.3\times10^{-4}$\\ \hline
			$m\leq$&$264$&$168$& $133$&$113$&$102$&$95$&$89$\\ \hline
		\end{tabular}
		\begin{tabular}{|l||*{4}{c|}}\hline
			$g$&$9$&$10$& $11$&$12$ \\ \hline \hline
			$q_t$&$q_{147}$&$q_{134}$&$q_{140}$&$q_{154}$\\ \hline 
			$\varepsilon\geq$&$2.7\times10^{-5}$&$1.8\times10^{-5}$&$1.1\times10^{-5}$&$4.6\times10^{-6}$\\ \hline
			$m\leq$&$85$&$82$&$77$&$77$\\ \hline
		\end{tabular}
	\end{table}
\end{center}
In all cases, we can conclude that
$$ 1\leq m \leq \frac{\log\left((16/\log 2).q_{153}/0.0012 \right) }{\log 2}\leq 265.$$
\begin{step}
\end{step}
For the third application of Lemma \ref{lem3},  using the inequality (\ref{eq11}), we set

$$ \Gamma_3=\log(\varLambda_3+1)=k\log\alpha-(n+t)\log g+\log\left( \frac{a(g-1)^4}{d_1d_2d_3d_4(g^\ell-1)(g^m-1)}\right). $$
Thus inequality (\ref{eq11}) becomes  
$$
|\mathrm{e}^{\Gamma_3}-1|<\frac{4}{g^m}<\frac{1}{2}.
$$ 
Which holds for $m\geq3$. It follows that
\begin{equation}
\label{eq15}
\left|\,k\left( \frac{\log\alpha}{\log g}\right)-(n+t)+\frac{\log\left( \frac{a(g-1)^4}{d_1d_2d_3d_4(g^{\ell}-1)(g^{m}-1)}\right) }{\log g}  \right| < \frac{8}{\log g}\cdot g^{-n}.
\end{equation}
Since the conditions of Lemma \ref{lem3} are satisfied, we proceed to apply this Lemma to the inequality (\ref{eq15}) with the following data
$$ \tau:=\frac{\log\alpha}{\log g},\; \;\mu:= \frac{\log\left( \frac{a(g-1)^4}{d_1d_2d_3d_4(g^{\ell}-1)(g^m-1)}\right) }{\log g},\;\; A:= \frac{8}{\log g},$$
$$B:= g,\;\; w:=n,\;\; u=k, \text{ and } v=n+t  $$  with $$1\leq d_1\leq d_2\leq d_3\leq d_4\leq g-1,\;\; 1\leq\ell\leq257 \text{ and } 1\leq m\leq265. $$
We can take $M:=3.5\times10^{73}$, since $k<12t\log g<3.5\times10^{73}.$\\ With \textit{Mathematica} we get the results given in Table \ref{table4}:

\begin{center}
	\begin{table}[!h]
		\caption{Upper bound on $n$}
		\label{table4}
		\begin{tabular}{|l||*{7}{c|}}\hline
			$g$& $2$&$3$& $4$&$5$&$6$&$7$&$8$ \\ \hline \hline
			$q_t$& $q_{153}$&$q_{141}$&$q_{147}$&$q_{146}$&$q_{134}$&$q_{151}$&$q_{153}$\\ \hline 
			$\varepsilon\geq$& $4.28\times10^{-6}$&$1.6\times10^{-6}$&$3.39\times10^{-6}$&$2.3\times10^{-6}$&$2.1\times10^{-6}$&$1.2\times10^{-6}$&$8\times10^{-6}$\\ \hline
			$n\leq$&$270$&$171$& $135$&$115$&$103$&$95$&$89$\\ \hline
		\end{tabular}
		\begin{tabular}{|l||*{4}{c|}}\hline
			$g$&$9$&$10$& $11$&$12$ \\ \hline \hline
			$q_t$&$q_{147}$&$q_{134}$&$q_{140}$&$q_{154}$\\ \hline 
			$\varepsilon\geq$&$3.2\times10^{-7}$&$1.7\times10^{-6}$&$7.3\times10^{-7}$&$9\times10^{-7}$\\ \hline
			$n\leq$&$86$&$82$&$77$&$77$\\ \hline
		\end{tabular}
	\end{table}
\end{center}
In all cases, we can conclude that
$$ 1\leq n \leq \frac{\log\left((8/\log 2).q_{153}/4.28\times10^{-6} \right) }{\log 2}\leq271.$$
\begin{step}
\end{step}

Finally, to further reduce the bound on $t$, we set
$$ \Gamma_4=\log(\varLambda_3+1)=k\log\alpha-t\log g+\log\left( \frac{a(g-1)^4}{d_1d_2d_3d_4(g^\ell-1)(g^m-1)(g^n-1)}\right). $$
Therefore inequality (\ref{eq13}) becomes  
$$
|\mathrm{e}^{\Gamma_4}-1|<\frac{2}{g^{t-1}}<\frac{1}{2}. 
$$ 
Which holds for $t\geq3$. It follows that
\begin{equation}
\label{eq16}
\left|\,k\left( \frac{\log\alpha}{\log g}\right)-t+\frac{\log\left( \frac{a(g-1)^4}{d_1d_2d_3d_4(g^{\ell}-1)(g^{m}-1)(g^n-1)}\right) }{\log g}  \right| < \frac{4}{\log g}\cdot g^{-(t-1)}.
\end{equation}
Since the conditions of Lemma \ref{lem3} are satisfied, we proceed to apply this Lemma to the inequality (\ref{eq16}) with the following data
$$ \tau:=\frac{\log\alpha}{\log g},\; \;\mu:= \frac{\log\left( \frac{a(g-1)^4}{d_1d_2d_3d_4(g^{\ell}-1)(g^m-1)(g^n-1)}\right) }{\log g},\;\; A:= \frac{4}{\log g},$$
$$B:= g,\;\; w:=t-1,\;\; u=k, \text{ and } v=t$$ with $$1\leq d_1\leq d_2\leq d_3\leq d_4\leq g-1,\;\; 1\leq\ell\leq257, $$

$$ 
1\leq m\leq265,\,\; \text{ and }1\leq n\leq 271.
$$
We can take $M:=3.5\times10^{73}$, since $k<12t\log g<3.5\times10^{73}.$\\ With \textit{Mathematica} we get the results given in Table \ref{table5}:
\begin{center}
	\begin{table}[!h]
		\caption{Upper bound on $t$}
		\label{table5}
		\begin{tabular}{|l||*{7}{c|}}\hline
			$g$& $2$&$3$& $4$&$5$&$6$&$7$&$8$ \\ \hline \hline
			$q_t$& $q_{153}$&$q_{141}$&$q_{147}$&$q_{145}$&$q_{134}$&$q_{151}$&$q_{152}$\\ \hline 
			$\varepsilon\geq$& $4.28\times10^{-6}$&$1.6\times10^{-6}$&$1.9\times10^{-6}$&$2.3\times10^{-6}$&$5.4\times10^{-6}$&$6\times10^{-6}$&$8\times10^{-6}$\\ \hline
			$t\leq$&$269$&$170$& $135$&$115$&$103$&$94$&$88$\\ \hline
		\end{tabular}
		\begin{tabular}{|l||*{4}{c|}}\hline
			$g$&$9$&$10$& $11$&$12$ \\ \hline \hline
			$q_t$&$q_{147}$&$q_{134}$&$q_{140}$&$q_{154}$\\ \hline 
			$\varepsilon\geq$&$3.2\times10^{-6}$&$9.6\times10^{-6}$&$7.9\times10^{-7}$&$2.8\times10^{-6}$\\ \hline
			$t\leq$&$85$&$80$&$77$&$76$\\ \hline
		\end{tabular}
	\end{table}
\end{center}
In all cases, we can conclude that
$$ 1\leq t \leq \frac{\log\left((2/\log 2).q_{153}/3.8\times10^{-7} \right) }{\log 2}\leq 270$$
which is valid for all $ g $ such as $ 2\leq g\leq12 $. In light of the above results, we
need to check the equation (\ref{eq1}) in the cases $ 2\leq g\leq12$ for $ 1\leq d1; d2; d3;d_4\leq 11,$  $1\leq \ell\leq257$,
$1\leq m\leq265$,$1\leq n\leq271$, $1\leq t\leq270$ and $1\leq k\leq8051$. A quick inspection using \textit{Mathematica} reveals that the Diophantine equation (\ref{eq1}) in the
range $2\leq g \leq12$ has only the solutions listed in the statement of Theorem \ref{theorem2}.
This completes the proof of Theorem \ref{theorem2}.


\end{document}